\newtheorem{precor}{{\bf Corollary}}
\newtheorem{precon}{{\bf Conjecture}}
\newtheorem{prealphcon}{{\bf Conjecture}}
\newtheorem{predefin}{{\bf Definition}}
\newtheorem{preexm}{{\bf Example}}
\newtheorem{preappl}{{\bf Application}}
\newtheorem{prelem}{{\bf Lemma}}
\newenvironment{lem}{\begin{prelem}{\hspace{-0.5
               em}{\bf.\ }}}{\end{prelem}}
\newtheorem{preproof}{{\bf Proof.\ }}
\newenvironment{proof}[1]{\begin{preproof}{\rm
               #1}\hfill{$\blacksquare$}}{\end{preproof}}
\newtheorem{pretheorem}{{\bf Theorem}}
\newenvironment{theorem}{\begin{pretheorem}{\hspace{-0.5
               em}{\bf.\ }}}{\end{pretheorem}}
\newtheorem{prealphtheorem}{{\bf Theorem}}
\newtheorem{prealphlem}{{\bf Lemma}}
\newtheorem{prepro}{{\bf Proposition}}
\newtheorem{preprb}{{\bf Problem}}
\newtheorem{prerem}{{\bf Remark}}
\newtheorem{preapp}{{\bf Application}}
\newtheorem{prequ}{{\bf Question}}
\def\conct[#1,#2]{\mbox {${#1} \leftrightarrow {#2}$}}
\def\dconct[#1,#2]{\mbox {${#1} \rightarrow {#2}$}}
\def\deg[#1,#2]{\mbox {$d_{_{#1}}(#2)$}}
\def\mindeg[#1]{\mbox {$\delta_{_{#1}}$}}
\def\maxdeg[#1]{\mbox {$\Delta_{_{#1}}$}}
\def\outdeg[#1,#2]{\mbox {$d_{_{#1}}^{^+}(#2)$}}
\def\minoutdeg[#1]{\mbox {$\delta_{_{#1}}^{^+}$}}
\def\maxoutdeg[#1]{\mbox {$\Delta_{_{#1}}^{^+}$}}
\def\indeg[#1,#2]{\mbox {$d_{_{#1}}^{^-}(#2)$}}
\def\minindeg[#1]{\mbox {$\delta_{_{#1}}^{^-}$}}
\def\maxindeg[#1]{\mbox {$\Delta_{_{#1}}^{^-}$}}
\def\dre[#1,#2,#3]{\mbox {${\cal E}^{^{#3}}(#1,#2)$}}
\def\var[#1,#2]{\mbox {${\rm Var}_{_{#1}}(#2)$}}
\def\ls[#1]{\mbox {$\xi^{^{#1}}$}}
\def\hom[#1,#2]{\mbox {${\rm Hom}({#1},{#2})$}}
\def\onvhom[#1,#2]{\mbox {${\rm Hom^{v}}(#1,#2)$}}
\def\onehom[#1,#2]{\mbox {${\rm Hom^{e}}(#1,#2)$}}
\def\core[#1]{\mbox {$#1^{^{\bullet}}$}}
\def\cay[#1,#2]{\mbox {${\rm Cay}({#1},{#2})$}}
\def\sch[#1,#2,#3]{\mbox {${\rm Sch}({#1},{#2},{#3})$}}
\def\cays[#1,#2]{\mbox {${\rm Cay_{s}}({#1},{#2})$}}
\def\dirc[#1]{\mbox {$\stackrel{\rightarrow}{C}_{_{#1}}$}}
\def\cycl[#1]{\mbox {${\bf Z}_{_{#1}}$}}
\begin{document}

\begin{center}
{\Large \bf On The b-Chromatic Number of Regular Graphs Without $4$-Cycle}\\
\vspace{0.3 cm}
{\bf Saeed Shaebani}\\
{\it Department of Mathematical Sciences}\\
{\it Institute for Advanced Studies in Basic Sciences {\rm (}IASBS{\rm )}}\\
{\it P.O. Box {\rm 45195-1159}, Zanjan, Iran}\\
{\tt s\_shaebani@iasbs.ac.ir}\\ \ \\
\end{center}
\begin{abstract}
\noindent The b-chromatic number of a graph $G$, denoted by
$\varphi(G)$, is the largest integer $k$ that $G$ admits a proper
$k$-coloring such that each color class has a vertex that is
adjacent to at least one vertex in each of the other color
classes. We prove that for each $d$-regular graph $G$ which
contains no $4$-cycle,
$\varphi(G)\geq\lfloor\frac{d+3}{2}\rfloor$ and if $G$ has a
triangle, then $\varphi(G)\geq\lfloor\frac{d+4}{2}\rfloor$. Also,
if $G$ is a $d$-regular graph which contains no $4$-cycle and
$diam(G)\geq6$, then $\varphi(G)=d+1$. Finally, we show that for
any $d$-regular graph $G$ which does not contain $4$-cycle and
$\kappa(G)\leq\frac{d+1}{2}$, $\varphi(G)=d+1$.
\\

\noindent {\bf Keywords:}\ {b-chromatic number, girth, diameter, vertex connectivity.}\\
{\bf Subject classification: 05C15}
\end{abstract}
\section{Introduction}
All graphs considered in this paper are finite and simple
(undirected, loopless and without multiple edges). Let $G=(V,E)$
be a graph. A coloring (proper coloring) of $G$ is called a
b-coloring of $G$ if each color class contains a vertex that is
adjacent to at least one vertex in each of the other color
classes. The b-chromatic number of $G$, denoted by $\varphi(G)$,
is the largest integer $k$ such that $G$ admits a b-coloring by
$k$ colors. The concept of b-coloring of graphs introduced by
Irving and Manlove in 1999 in \cite{irv} and has received
attention recently, for example, see $[1-26]$.

Let $G$ be a graph which is colored. Suppose that $v$ is a vertex
of $G$ whose color is $c$. We say that $v$ is a color-dominating
vertex or color $c$ realizes on $v$ if $v$ is adjacent to at
least one vertex in each of the other color classes.

It is obvious that for each graph $G$ with maximum degree
$\Delta(G)$, $\varphi(G)\leq\Delta(G)+1$. Kratochvil, Tuza and
Voigt in \cite{kratv} proved that every $d$-regular graph with at
least $d^{4}$ vertices satisfies $\varphi(G)=d+1$. Cabello and
Jakovac in \cite{ca.ja} reduced $d^{4}$ to $2d^{3}-d^{2}+d$.
These bounds show that for each natural number $d$, there are
only finite $d$-regular graphs such that $\varphi(G)\neq d+1$. El
Sahili and Kouider in \cite{sa.ko} asked whether it is true that
every $d$-regular graph of girth at least 5 satisfies
$\varphi(G)=d+1$. Blidia, Maffray and Zemir in \cite{b.m.z}
proved that b-chromatic number of the Petersen graph is 3 and
then conjectured that the Petersen graph is the only exception.
They proved this conjecture for $d\leq6$. Kouider in \cite{kou1}
proved that b-chromatic number of any $d$-regular graph of girth
at least 6 is $d+1$. El Sahili and Kouider in \cite{sa.ko} proved
that b-chromatic number of any $d$-regular graph of girth 5 that
contains no 6-cycle is $d+1$.  Cabello and Jakovac in
\cite{ca.ja} proved a celebrated theorem for the b-chromatic
number of regular graphs of girth 5 which guarantees that
b-chromatic number of $d-$regular graphs with girth at least 5 in
bounded below by a linear function of $d$. They proved that a
$d$-regular graph with girth at least 5 has b-chromatic number at
least $\lfloor\frac{d+1}{2}\rfloor$. Also, they proved that for
except small values of $d$, every connected $d$-regular graph that
contains no $4$-cycle and its diameter is at least $d$, has
b-chromatic number $d+1$.

In this paper, we discuss about b-chromatic number of $d$-regular
graphs with no $4$-cycles. First, in Section 2 we prove that
$\varphi(G)\geq\lfloor\frac{d+3}{2}\rfloor$ and if $G$ has a
triangle, then $\varphi(G)\geq\lfloor\frac{d+4}{2}\rfloor$. These
lower bounds are sharp for the Petersen graph. Also, in Section 3
we prove that if $diam(G)\geq6$, then $\varphi(G)=d+1$. Finally,
in Section 3 we show that if $\kappa(G)\leq\frac{d+1}{2}$, then
$\varphi(G)=d+1$. This upper bound for vertex connectivity is
sharp for the Petersen graph. Through this paper, for each
natural number $n$, define $[n]:=\{i|i\in \mathbb{N},1\leq i\leq
n \}$.

\section{A Lower Bound}

There are many $d$-regular graphs that contain $4$-cycles and
their b-chromatic number is not $d+1$. The most famous graphs
with this property are complete bipartite graphs $K_{d,d}$ whose
b-chromatic number equals 2, independent from $d$. Cabello and
Jakovac in \cite{ca.ja} proved that the b-chromatic number of
$d-$regular graphs with girth at least 5 in bounded below by a
linear function of $d$. They proved that a $d$-regular graph with
girth at least 5 has b-chromatic number at least
$\lfloor\frac{d+1}{2}\rfloor$. In this regard, they proved the
following technical lemma.

\begin{lem}\label{mainlemma}
{Let $H$ be a bipartite graph with partitions $U$ and $V$ such
that $|U|=|V|$. Let $u^{*}\in U$ and $v^{*}\in V$. If for each
vertex $x\in V(H)\setminus\{u^{*},v^{*}\}$,
$deg_{H}(x)\geq\frac{|V|}{2}$, $deg_{H}(u^{*})>0$ and
$deg_{H}(v^{*})>0$, then $H$ has a perfect matching. }
\end{lem}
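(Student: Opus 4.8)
The plan is to verify Hall's marriage condition for the $U$-side of $H$ and then invoke Hall's theorem; since $|U|=|V|$, any matching that saturates $U$ is automatically perfect. Write $n:=|U|=|V|$ and, for $S\subseteq U$, let $N(S)$ denote $N_H(S)$. First I would dispose of the degenerate subsets: $|N(\emptyset)|=0$ trivially, and $|N(\{u^{*}\})|=\deg_H(u^{*})\geq 1$ by hypothesis, so Hall's inequality holds for these. Thus it remains to treat an arbitrary $S\subseteq U$ with $\emptyset\neq S\neq\{u^{*}\}$; then $S\setminus\{u^{*}\}\neq\emptyset$, so one may fix some $x\in S\setminus\{u^{*}\}$, and the degree hypothesis gives $|N(S)|\geq\deg_H(x)\geq n/2$.

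The argument then splits on the size of $S$. If $|S|\leq n/2$, we are done at once, since $|N(S)|\geq n/2\geq|S|$. So suppose $|S|>n/2$ and, for contradiction, that $|N(S)|\leq|S|-1$; in particular $|N(S)|\leq n-1$, so $V\setminus N(S)\neq\emptyset$ and we may pick $y\in V\setminus N(S)$. Every neighbour of $y$ lies in $U\setminus S$, whence $\deg_H(y)\leq|U\setminus S|=n-|S|<n/2$. Since $y\in V$ we have $y\neq u^{*}$, so the degree hypothesis forces $y=v^{*}$. Consequently $V\setminus N(S)=\{v^{*}\}$ and $|N(S)|=n-1$; together with $|N(S)|\leq|S|-1\leq n-1$ this pins down $|S|=n$, i.e.\ $S=U$. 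But then $v^{*}\in V\setminus N(U)$ says $v^{*}$ has no neighbour at all, contradicting $\deg_H(v^{*})>0$. Hence $|N(S)|\geq|S|$ in this case too, Hall's condition holds for every $S\subseteq U$, and the perfect matching follows.

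This is essentially a bookkeeping exercise around Hall's theorem, so there is no single deep obstacle; the step requiring genuine care is the deficiency case, whose crux is the observation that a vertex of $V$ left outside $N(S)$ is forced to have degree below $n/2$ and hence must be the exceptional vertex $v^{*}$ — after which the inequalities force $S=U$ and positivity of $\deg_H(v^{*})$ yields the contradiction. One should also watch parity: when $n$ is odd, $\deg_H(x)\geq n/2$ means $\deg_H(x)\geq\lceil n/2\rceil$, and the split is arranged so that $|S|>n/2$ forces $n-|S|<n/2$, which is exactly what makes $\deg_H(y)<n/2$ in that case.
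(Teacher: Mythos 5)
Your proof is correct. Note that the paper itself does not prove this lemma: it is quoted verbatim from Cabello and Jakovac \cite{ca.ja} as a known technical tool, so there is no in-paper argument to compare against. Your verification of Hall's condition on the $U$-side is the natural route and is complete: the degenerate sets $\emptyset$ and $\{u^{*}\}$ are handled by the positivity hypothesis, any other $S$ contains a non-exceptional vertex and hence satisfies $|N(S)|\geq n/2$, and in the deficiency case $|S|>n/2$ the key observation that a vertex of $V\setminus N(S)$ has all its neighbours in $U\setminus S$, hence degree below $n/2$, correctly forces that vertex to be $v^{*}$, pins down $S=U$, and contradicts $\deg_H(v^{*})>0$. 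The parity remark at the end is also handled properly, since $|N(S)|$ is an integer and $|S|>n/2$ gives $n-|S|<n/2$ exactly as needed.
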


With a slice refinement, we obtain the following lower bounds.

\begin{theorem}{ \label{mainthm} Let $G$ be a $d$-regular graph that contains no $4$-cycle.
Then $\varphi(G)\geq\lfloor\frac{d+3}{2}\rfloor$. Besides, If $G$ has a triangle,
then $\varphi(G)\geq\lfloor\frac{d+4}{2}\rfloor$. These lower
bounds are sharp for the Petersen graph. }
\end{theorem}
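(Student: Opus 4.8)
The plan is to follow the strategy of Cabello and Jakovac, but to exploit the absence of $4$-cycles a little more aggressively and to use the slack built into Lemma \ref{mainlemma}. I would first dispose of an easy case. For every graph one has $\chi(G)\le\varphi(G)$: any proper colouring with $\chi(G)$ colours is already a b-colouring, since if some colour class had no color-dominating vertex we could recolour each of its vertices with a colour it misses and obtain a proper colouring with fewer colours. Hence, writing $t$ for the target value $\lfloor\frac{d+3}{2}\rfloor$ (resp. $\lfloor\frac{d+4}{2}\rfloor$ when a triangle is present), we may assume $\chi(G)\le t-1$; in particular $G$ has a proper colouring with $t-1$ colours, so there is room to introduce a new colour $t$ and we must only construct a b-colouring using exactly $t$ colours.

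Next I would record the two structural consequences of $C_4$-freeness: any two vertices have at most one common neighbour, and therefore for each vertex $v$ the subgraph induced on $N(v)$ has maximum degree at most $1$ (two edges $u_iu_j,u_iu_k$ at $u_i$ would give $u_j,u_k$ the two common neighbours $u_i$ and $v$). Fix a vertex $v$, write $N(v)=\{u_1,\dots,u_d\}$, and take as the color-dominating vertices $v$ together with $u_1,\dots,u_{t-1}$, colouring $v$ with colour $t$ and $u_i$ with colour $i$. Then $v$ is color-dominating for free, since it is adjacent to $u_1,\dots,u_{t-1}$. For each $i\in[t-1]$ the vertex $u_i$ already sees colour $t$ through $v$, so it remains to realise the colours $[t-1]\setminus\{i\}$ on the private set $R_i:=N(u_i)\setminus(\{v\}\cup\{u_1,\dots,u_{t-1}\})$. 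By $C_4$-freeness the sets $R_i$ are pairwise disjoint and no vertex of $R_i$ is adjacent to any $u_j$ with $j\neq i$ (such a vertex would be a second common neighbour of $u_i$ and $u_j$), and since $N(v)$ induces a matching each $u_i$ has at most one neighbour among the $u_j$'s, so $|R_i|\ge d-2\ge t-2$: there is always enough room.

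The heart of the argument is then to colour $\bigcup_i R_i$ together with the rest of $G$ by $t$ colours, properly, so that colour $c$ occurs on $R_i$ for every $i$ and every $c\in[t-1]\setminus\{i\}$. I would start from a fixed proper $(t-1)$-colouring of $G$, keep it on all vertices at distance at least $3$ from $v$, and use it only to control the colours ``seen from outside'' by the sets $R_i$ and by the vertices at distance $2$; then, for each $i$ in turn, I would apply Lemma \ref{mainlemma} to a bipartite graph one side of which is the set of colours of $[t-1]\setminus\{i\}$ still to be placed and the other side a suitable collection of slots inside $R_i$, with adjacency ``this colour may legally be put on this slot''. The role of $C_4$-freeness is that a slot is blocked for a given colour only through genuine edges, and each obstruction (a neighbour of the slot already carrying that colour, or a clash with some $R_j$) can be charged so that each colour is compatible with at least half of the slots, which is precisely the hypothesis of Lemma \ref{mainlemma}; the two exceptional low-degree vertices permitted by that lemma absorb the unavoidable boundary irregularities, and a perfect matching is exactly a conflict-free assignment. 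The leftover vertices at distance $2$ are then coloured one at a time: each is non-adjacent to $v$, so colour $t$ is available to it in addition to $[t-1]$, leaving enough colours to avoid its already-coloured neighbours. In the triangle case I would pick $v$ on a triangle, so that $N(v)$ contains an edge, say $u_1u_2$; then $u_1$ sees colour $2$ and $u_2$ sees colour $1$ through this edge without spending a slot of $R_1$ or $R_2$, and this single extra free adjacency is exactly what lets the same construction run with the target raised by one to $\lfloor\frac{d+4}{2}\rfloor$. The main obstacle is the third paragraph: establishing the ``at least half the slots'' estimate and arranging the recolouring of the vertices at distance at most $2$ so that the local repairs around different $u_i$'s do not interfere; the rest is bookkeeping, and the sharpness for the Petersen graph ($3$-regular, $C_4$-free, triangle-free, $\varphi=3=\lfloor\frac{3+3}{2}\rfloor$) is immediate.
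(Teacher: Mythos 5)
Your overall architecture (a root vertex $v$, its neighbours as the would-be colour-dominating vertices, the Hall-type Lemma \ref{mainlemma} to colour the second neighbourhood) is the same as the paper's, but the plan in your third paragraph --- which you yourself flag as ``the main obstacle'' --- does not go through as sketched, and the paper resolves it by a device you never introduce. You insist on working with only $t=\lfloor\frac{d+3}{2}\rfloor$ colours from the outset, keeping a proper $(t-1)$-colouring on the rest of the graph. This creates three concrete failures. First, a slot $x\in R_i$ has up to $d-1$ neighbours outside the region being recoloured, all pre-coloured from $[t-1]$; since $d-1\ge t-1$ for $d\ge 3$, every colour can be blocked at $x$, so the ``at least half the slots'' hypothesis of Lemma \ref{mainlemma} cannot be verified --- the slot-side degrees can be $0$. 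Second, the colour-side degree bound in the matching graph needs each colour to occur at most once in each previously coloured $R_j$; with only $t$ colours on a set $R_j$ of size roughly $d-1\approx 2t$ this is simply not available, so a single colour can be blocked at many slots of $R_i$ through the matching of edges joining $R_i$ to $R_j$. Third, your final greedy step (``colour $t$ is available \dots leaving enough colours to avoid its already-coloured neighbours'') is false: a vertex at distance $2$ from $v$ has $d\ge t$ neighbours and only $t$ colours to choose from.

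The paper sidesteps all three difficulties at once by using all $d+1$ colours: each dominating vertex must then see all $d+1$ colours on its closed neighbourhood of size exactly $d+1$, so the colouring there is forced to be a bijection (injectivity on each $V_j$ comes for free, which yields the colour-side degree bound $\deg_{H_i}(c)\ge |V_i|-(i-1)$), the greedy extension to the whole graph is trivial with $d+1$ colours available against $d$ neighbours, and the count is finished by the standard reduction: repeatedly delete any unrealized colour class by recolouring each of its vertices with a colour it misses, which preserves every colour-dominating vertex and terminates in a b-colouring with at least $\lfloor\frac{d+1}{2}\rfloor+1=\lfloor\frac{d+3}{2}\rfloor$ colours. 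Your opening reduction to $\chi(G)\le t-1$ is correct but does not substitute for this final step. Your triangle case also misses the paper's mechanism: the gain is not that $u_1$ and $u_2$ ``see each other's colour for free,'' but that once $v_1v_{\frac{d+2}{2}}\in E(G)$, $C_4$-freeness forbids any edge between $V_1$ and $V_{\frac{d+2}{2}}$, and it is exactly this missing set of edges that restores the bound $\deg_{H_{\frac{d+2}{2}}}(a)\ge \frac{1}{2}|V_{\frac{d+2}{2}}|$ needed to run one extra step of the induction when $d$ is even.
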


\begin{proof}
{ There is nothing to prove when $d\in \{0,1,2\}$. So, let us
suppose that $d\geq3$. Let $v$ be an arbitrary vertex of $G$.
Since $G$ contains no $4$-cycle, the maximum degree of the
induced subgraph of $G$ on $N_{G}(v)$ is at most one. If $d$ is
odd, there exists some vertex of $N_{G}(v)$ that is not adjacent
to any vertex of $N_{G}(v)$. If $d$ is even, let
$v_{1},v_{2},\ldots,v_{d}$ be an arbitrary ordering of $N_{G}(v)$
and if $d$ is odd, let $v_{1},v_{2},\ldots,v_{d}$ be an arbitrary
ordering of $N_{G}(v)$ such that $v_{\frac{d+1}{2}}$ does not
have any neighbors in $N_{G}(v)$.

We color all vertices of
$N_{G}(v)\bigcup(\bigcup_{j=1}^{\lfloor\frac{d+1}{2}\rfloor}N_{G}(v_{i}))$
by the colors of the set $[d+1]$ such that each vertex of
$\{v\}\bigcup\{v_{i}|1\leq i\leq\lfloor\frac{d+1}{2}\rfloor\}$
sees all colors of $[d+1]$ on its closed neighborhood. First, we
color the vertex $v$ by color $d+1$ and for each $1\leq i\leq d$,
we color $v_{i}$ by color $i$. For each $1\leq i\leq d$, let
$V_{i}:=N_{G}(v_{i})\setminus (\{v\}\bigcup N_{G}(v))$,
$C_{i}:=[d]\setminus ($the set of colors that are appeared on
$\{v_{i}\}\bigcup(N_{G}(v_{i})\bigcap N_{G}(v))$ and
$S_{i}:=\{v\}\bigcup N_{G}(v)\bigcup(\bigcup_{j=1}^{i}V_{j})$.
Obviously, $|V_{i}|=|C_{i}|=d-1$ or $|V_{i}|=|C_{i}|=d-2$. Also,
$|V_{i}|=|C_{i}|=d-2$ if and only if $|N_{G}(v_{i})\bigcap
N_{G}(v)|=1$. Also, since $G$ contains no $4$-cycle, for each
$1\leq i,j\leq d$, if $i\neq j$, then $V_{i}\bigcap V_{j}=\phi$.
Now, we follow $\lfloor\frac{d+1}{2}\rfloor$ steps inductively.
For each $1\leq i\leq\lfloor\frac{d+1}{2}\rfloor$, at $i$-th
step, we only color all vertices of $V_{i}$ by all colors of
$C_{i}$ injectively. Suppose by induction that $1\leq
i\leq\lfloor\frac{d+1}{2}\rfloor$ and for each $1\leq k\leq i-1$,
at $k$-th step, we have only colored all vertices of $V_{k}$ by
all colors of $C_{k}$ injectively is such a way that the
resulting partial coloring on $S_{k}$ is a proper partial
coloring. Now, at $i$-th step, we want to color only all vertices
of $V_{i}$ by all colors of $C_{i}$ injectively is such a way
that the resulting partial coloring on $S_{i}$ be a proper
partial coloring. We consider a bipartite graph $H_{i}$ with one
partition $V_{i}$ and the other partition $C_{i}$ that a vertex
$x\in V_{i}$ is adjacent to a color $c\in C_{i}$ in the graph
$H_{i}$ if and only if (in the graph $G$) $x$ does not have any
neighbors in $S_{i-1}$ already colored by $c$. Such a coloring of
all vertices of $V_{i}$ by all colors of $C_{i}$ (as mentioned)
exists if and only if $H_{i}$ has a perfect matching.

Let $x$ be an arbitrary element of $V_{i}$. The set of neighbors
of $x$ in the graph $G$ that were already colored, is a subset of
$\{v_{i}\}\bigcup(\bigcup_{j=1}^{i-1}V_{j})$. Since $G$ contains
no $4$-cycle, for each $1\leq j\leq i-1$, $x$ has at most one
neighbor in $V_{j}$. Also, the color of the vertex $v_{i}$ does
not belong to $C_{i}$. Therefore, $deg_{H_{i}}(x)\geq
|C_{i}|-(i-1)$. Also, since for each $1\leq j\leq i-1$, $v_{j}$
sees all colors of $[d+1]$ on its closed neighborhood, each color
of $[d+1]$ appears at most once on $V_{j}$. Therefore, for each
$c\in C_{i}$, $deg_{H_{i}}(c)\geq|V_{i}|-(i-1)$. Hence, for each
$a\in V(H_{i})$, $deg_{H_{i}}(a)\geq|V_{i}|-(i-1)$. Since
$|V_{i}|\geq d-2$, if $1\leq i\leq\frac{d}{2}$, then
$|V_{i}|-(i-1)\geq\frac{|V_{i}|}{2}$. Therefore,
$deg_{H_{i}}(a)\geq\frac{|V_{i}|}{2}$. If
$\frac{d}{2}<i\leq\lfloor\frac{d+1}{2}\rfloor$, then $d$ is odd
and $i=\frac{d+1}{2}$. Since $v_{\frac{d+1}{2}}$ does not have any
neighbors in $N_{G}(v)$, $|V_{\frac{d+1}{2}}|=d-1$ and therefore,
$deg_{H_{\frac{d+1}{2}}}(a)\geq|V_{\frac{d+1}{2}}|-(\frac{d+1}{2}-1)=\frac{d-1}{2}=\frac{|V_{\frac{d+1}{2}}|}{2}$.

So, Lemma \ref{mainlemma} implies that $H_{i}$ has a perfect
matching and we are done. We conclude that there exists a partial
coloring on
$N_{G}(v)\bigcup(\bigcup_{j=1}^{\lfloor\frac{d+1}{2}\rfloor}N_{G}(v_{i}))$
by all colors of the set $[d+1]$ such that each vertex of
$\{v\}\bigcup\{v_{i}|1\leq i\leq\lfloor\frac{d+1}{2}\rfloor\}$
sees all colors of $[d+1]$ on its closed neighborhood. This
coloring extends to a coloring of $G$ greedily. In the current
coloring of $G$, all colors of the set
$[\lfloor\frac{d+1}{2}\rfloor]\bigcup\{d+1\}$ are realized. If
there exists a color $c\in [d+1]$ that does not realize in this
coloring, then for each vertex $a$ in $V(G)$, which is colored by
$c$, there exists a color (like $e$) in $[d+1]\setminus \{c\}$
such that $a$ does not have any neighbors in $G$ colored by $e$.
We exchange the color of the vertex $a$ by color $e$ (we recolor
the vertex $a$ by color $e$). The resulting coloring is a
coloring of $G$ such that the color of each vertex whose color was
different from $c$ in the previous coloring, has not been
changed. Also, the color $c$ does not appear in the current
coloring and therefore, the number of colors reduces. Each vertex
that was a color-dominating vertex in the previous coloring, is
again a color-dominating vertex in the current coloring.
Repeating this procedure, there exists a b-coloring of $G$ with
at least $\lfloor\frac{d+3}{2}\rfloor$ colors and therefore,
$\varphi(G)\geq\lfloor\frac{d+3}{2}\rfloor$.

Now, assume that $G$ has a triangle. If $d$ is odd, then
$\lfloor\frac{d+3}{2}\rfloor=\lfloor\frac{d+4}{2}\rfloor$.
Consequently, we suppose that $d$ is even. Choose a vertex $v$
that is in a triangle. Let $v_{1},v_{2},\ldots,v_{d}$ be an
arbitrary ordering of $N_{G}(v)$ such that $\{v_{1},
v_{\frac{d+2}{2}}\}\in E(G)$. There are not any edges between
$V_{1}$ and $V_{\frac{d+2}{2}}$, otherwise $v_{1}$ and
$v_{\frac{d+2}{2}}$ will be contained in a $C_{4}$. Therefore,
degree of each vertex of $H_{\frac{d+2}{2}}$ is at least
$(d-2)-(\frac{d+2}{2}-2)={|V_{d+2 \over 2}|\over 2}$, as desired.}
\end{proof}

For each edge $\{u,v\}\in E(G)$, let $max_{\{u,v\}}$ be the
maximum number of $5$-cycles such that the intersection of any two
different elements of them is the edge $\{u,v\}$. Also, for each
path $P$ of length two where $V(P)=\{u,v,w\}$, set $max_P$, the
maximum number of $5$-cycles such that the intersection of any two
different elements of them is the path $P$. The following theorems
can be proved similarly. We omit their proofs for the sake of
brevity.

\begin{theorem}{ Let $G$ be a $d$-regular graph that contains no $4$-cycle. If
there exists a vertex $v\in V(G)$ such that for each $\{u,v\}\in
E(G)$, the number of $5$-cycles that contain the edge $\{u,v\}$
is less than or equal to $\frac{d-2}{2}$, then $\varphi(G)=d+1$.
Besides, if girth of $G$ is $5$, $\frac{d-2}{2}$ can be replaced
by $\frac{d-1}{2}$. }
\end{theorem}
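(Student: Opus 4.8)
The plan is to adapt the proof of Theorem \ref{mainthm}, now carrying the inductive construction through all $d$ steps rather than only $\lfloor\frac{d+1}{2}\rfloor$ of them, with the hypothesis on $5$-cycles supplying the density needed to keep the auxiliary bipartite graphs matchable. As in Theorem \ref{mainthm} we may assume $d\geq 3$. Take the vertex $v$ guaranteed by the hypothesis, let $v_{1},\dots,v_{d}$ be an arbitrary ordering of $N_{G}(v)$, color $v$ by $d+1$ and each $v_{i}$ by $i$, and for $1\leq i\leq d$ set $V_{i}:=N_{G}(v_{i})\setminus(\{v\}\cup N_{G}(v))$, let $C_{i}\subseteq[d]$ be the colors not yet appearing on $\{v_{i}\}\cup(N_{G}(v_{i})\cap N_{G}(v))$, and $S_{i}:=\{v\}\cup N_{G}(v)\cup\bigcup_{j\leq i}V_{j}$. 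Exactly as there, $|V_{i}|=|C_{i}|\in\{d-1,d-2\}$, the sets $V_{1},\dots,V_{d}$ are pairwise disjoint (no $4$-cycle), and it suffices to color, for $i=1,\dots,d$ in turn, the vertices of $V_{i}$ bijectively with $C_{i}$ so that $S_{i}$ stays properly colored; at step $i$ this is possible iff the bipartite graph $H_{i}$ (parts $V_{i}$ and $C_{i}$, defined as in Theorem \ref{mainthm}) has a perfect matching.

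The crucial point is the degree estimate for $H_{i}$. Fix $x\in V_{i}$. Any already colored neighbor of $x$ is not $v_{i}$ (its color $i$ does not lie in $C_{i}$) and is not any $v_{j}$ with $j\ne i$ (such a vertex together with $x$ and $v$ would be a common neighbor of $v_{i}$ and $v_{j}$, producing a $4$-cycle), so it lies in some $V_{j}$ with $j<i$; and by the absence of $4$-cycles $x$ has at most one neighbor in each $V_{j}$. For each index $j<i$ with a neighbor $w\in V_{j}$ of $x$, the five vertices $v,v_{i},x,w,v_{j}$ form a $5$-cycle containing the edge $\{v,v_{i}\}$, and distinct such $j$ give distinct $5$-cycles (they have distinct $v$-neighbor $v_{j}$). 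Hence the number of colors of $C_{i}$ forbidden for $x$ is at most the number of $5$-cycles through $\{v,v_{i}\}$, which by hypothesis is at most $\frac{d-2}{2}$, so $deg_{H_{i}}(x)\geq|C_{i}|-\frac{d-2}{2}$. Symmetrically, for $c\in C_{i}$ each $y\in V_{i}$ having an already colored neighbor $w$ of color $c$ lies on the $5$-cycle $v,v_{i},y,w,v_{j}$ through $\{v,v_{i}\}$ (again $w\in\bigcup_{j<i}V_{j}$), and $y$ is recovered from this cycle, so $deg_{H_{i}}(c)\geq|V_{i}|-\frac{d-2}{2}$. Since $|V_{i}|\geq d-2$, a direct computation gives $deg_{H_{i}}(a)\geq\frac{|V_{i}|}{2}$ for every $a\in V(H_{i})$; when $d$ is odd the count of $5$-cycles is at most $\frac{d-3}{2}$ and the inequality still holds. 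Thus Lemma \ref{mainlemma} yields a perfect matching of $H_{i}$, and the induction runs through all $d$ steps.

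After step $d$ we have a proper partial coloring in which $v$ and every $v_{i}$ sees all of $[d+1]$ on its closed neighborhood. Extending it greedily to $V(G)$ with the colors $[d+1]$ is possible because $\Delta(G)=d$, and it leaves $v,v_{1},\dots,v_{d}$ color-dominating (color classes only grow), so all $d+1$ colors are realized and $\varphi(G)=d+1$. If the girth of $G$ equals $5$ there are no triangles, so $N_{G}(v)$ is independent and $|V_{i}|=d-1$ for all $i$; the estimate above then only requires $deg_{H_{i}}(a)\geq|V_{i}|-\frac{d-1}{2}=\frac{d-1}{2}=\frac{|V_{i}|}{2}$, which holds provided each edge at $v$ lies in at most $\frac{d-1}{2}$ of the $5$-cycles, giving the stated strengthening. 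The main obstacle is exactly the injective encoding of ``bad'' overlaps between the sets $V_{j}$ as distinct $5$-cycles through $\{v,v_{i}\}$; once that bookkeeping is in place, the rest is a routine extension of the proof of Theorem \ref{mainthm}.
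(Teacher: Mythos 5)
Your proof is correct and follows exactly the route the paper intends: the paper omits the proof of this theorem, stating only that it ``can be proved similarly'' to Theorem \ref{mainthm}, and your adaptation --- running the induction through all $d$ steps and replacing the crude bound of $i-1$ conflicts by an injective encoding of each conflict as a distinct $5$-cycle through the edge $\{v,v_i\}$, so that $deg_{H_i}(a)\geq|V_i|-\frac{d-2}{2}\geq\frac{|V_i|}{2}$ --- is precisely the intended refinement. The bookkeeping (no $x\in V_i$ adjacent to $v_j$ for $j\neq i$, at most one neighbor per $V_j$, distinct $j$ giving distinct cycles) is sound, so nothing is missing.
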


\begin{theorem}{ Let $G$ be a $d$-regular graph that contains no $4$-cycle. If
there exists a vertex $v\in V(G)$ such that for each $\{u,v\}\in
E(G)$, $max_{\{u,v\}}\leq \frac{d-2}{2}$ and for each path $P$ of
length two, $max_P\leq \frac{d-2}{2}$, then $\varphi(G)=d+1$.
Besides, if girth of $G$ is 5, $\frac{d-2}{2}$ can be replaced by
$\frac{d-1}{2}$.
 }
\end{theorem}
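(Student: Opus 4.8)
The plan is to follow the proof of Theorem~\ref{mainthm} almost verbatim, making two changes: take the vertex $v$ supplied by the hypothesis as the center of the construction, and replace the crude estimate ``at most $i-1$ forbidden colors/vertices'' by estimates coming from the two $5$-cycle parameters, so that \emph{all} of $v$ together with its $d$ neighbors become color-dominating. Since all $d+1$ colors are then realized, the color-removal step at the end of Theorem~\ref{mainthm} is unnecessary here. As there we may assume $d\ge 3$. Fix $v$, let $v_1,\dots,v_d$ be any ordering of $N_G(v)$, color $v$ with $d+1$ and $v_i$ with $i$, and define $V_i$, $C_i$, $S_i$ exactly as in that proof; recall $|V_i|=|C_i|\in\{d-1,d-2\}$, the color of $v_i$ is not in $C_i$, and the $V_i$ are pairwise disjoint because $G$ has no $4$-cycle. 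We again process $i=1,\dots,d$ in turn, at step $i$ coloring $V_i$ bijectively onto $C_i$ so that $S_i$ stays properly colored; this is equivalent to a perfect matching of the bipartite graph $H_i$ in which $x\in V_i$ is joined to $c\in C_i$ iff $x$ has no neighbor in $S_{i-1}$ already colored $c$.

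The heart of the matter is the new degree bound for $H_i$. Fix $x\in V_i$. If $x$ has a neighbor $y\in V_j$ for some $j<i$, then $v,v_i,x,y,v_j$ are five distinct vertices (using $x\notin\{v\}\cup N_G(v)$, $y\in V_j$, $j\ne i$), so $\Gamma_j:=v\,v_i\,x\,y\,v_j\,v$ is a $5$-cycle through the length-two path $P:=v\,v_i\,x$. For two such indices $j\ne j'$ the cycles $\Gamma_j,\Gamma_{j'}$ meet in exactly $P$: the only coincidence not already excluded by the defining properties of the sets involved is $y=y'$ for the two chosen neighbors, and that would produce the $4$-cycle $v_j\,v\,v_{j'}\,y$. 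Hence the number of $j<i$ with $x$ having a neighbor in $V_j$ is at most $\max_P\le\frac{d-2}{2}$, and since $v_i$'s color is not in $C_i$ we obtain $\deg_{H_i}(x)\ge|C_i|-\lfloor\frac{d-2}{2}\rfloor\ge\frac{|C_i|}{2}=\frac{|V_i|}{2}$, the middle inequality being a short case check on $|C_i|\in\{d-1,d-2\}$ and the parity of $d$. Dually, fix $c\in C_i$: each $x\in V_i$ forbidden from $c$ is adjacent to the unique vertex $w\in V_j$ colored $c$ for some $j<i$ (unique since $v_j$ already sees all of $[d+1]$), distinct such $x$ force distinct $j$ (else a $4$-cycle through $v_i$), and the $5$-cycles $v\,v_i\,x\,w\,v_j\,v$ so obtained pairwise intersect in exactly the edge $\{v,v_i\}$; thus at most $\max_{\{v,v_i\}}\le\frac{d-2}{2}$ vertices of $V_i$ are forbidden from $c$, so $\deg_{H_i}(c)\ge|V_i|-\lfloor\frac{d-2}{2}\rfloor\ge\frac{|V_i|}{2}$. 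Every degree of $H_i$ is thus positive and at least $\frac{|V_i|}{2}$, and Lemma~\ref{mainlemma} supplies the perfect matching.

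After all $d$ steps we have a proper partial coloring of $N_G(v)\cup\bigcup_i N_G(v_i)$ in which $v$ sees colors $1,\dots,d$ on its neighbors and $d+1$ on itself, and each $v_i$ sees $d+1$ on $v$, $i$ on itself, the colors of $N_G(v_i)\cap N_G(v)$, and all of $C_i$ on $V_i$; by the definition of $C_i$ this is all of $[d+1]$. Extending greedily to the rest of $G$ is possible since each vertex has degree $d<d+1$, keeps the coloring proper, and destroys no color-dominating vertex, so all $d+1$ colors are realized and $\varphi(G)\ge d+1$; with $\varphi(G)\le\Delta(G)+1=d+1$ this gives $\varphi(G)=d+1$. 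For the ``$\mathrm{girth}(G)=5$'' refinement, then $N_G(v_i)\cap N_G(v)=\emptyset$ for every $i$ (a common neighbor would create a triangle), so $|V_i|=|C_i|=d-1$ throughout, and the same estimates with $\frac{d-1}{2}$ in place of $\frac{d-2}{2}$ still give $\deg_{H_i}(a)\ge(d-1)-\lfloor\frac{d-1}{2}\rfloor\ge\frac{d-1}{2}=\frac{|V_i|}{2}$, so nothing else changes.

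I expect the delicate point to be the verification that the two families of $5$-cycles pairwise intersect \emph{exactly} in $P$ (resp.\ exactly in $\{v,v_i\}$), since that is precisely what allows the use of $\max_P$ and $\max_{\{v,v_i\}}$ rather than the total number of $5$-cycles through $P$ or through $\{v,v_i\}$; each potential extra coincidence among the two cycles' five vertices must be ruled out, and in every case the exclusion rests on the absence of $4$-cycles in $G$. Everything else --- disjointness of the $V_i$, the elementary degree inequalities, and the greedy extension --- is routine, exactly as in Theorem~\ref{mainthm}.
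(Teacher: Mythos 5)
The paper omits the proof of this theorem, stating only that it ``can be proved similarly'' to Theorem \ref{mainthm}; your argument is exactly such an adaptation and is correct. In particular, your careful check that the two families of $5$-cycles pairwise intersect \emph{exactly} in the path $v\,v_i\,x$ (resp.\ exactly in the edge $\{v,v_i\}$) is precisely what licenses the use of $max_P$ and $max_{\{v,v_i\}}$, and since the resulting degree bounds for $H_i$ no longer depend on $i$, all $d$ steps go through and every color of $[d+1]$ is realized, as required.
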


\section{Diameter}

Cabello and Jakovac in \cite{ca.ja} proved that for $d\geq 10$,
every connected $d$-regular graph that contains no $4$-cycle and
its diameter is at least $d$, has b-chromatic number $d+1$. In
this section, we determine the b-chromatic number of $d$-regular
graphs that contain no $4$-cycles and have diameter at least $6$.

\begin{theorem}{ Let $G$ be a $d$-regular graph that contains no $4$-cycle. If $diam(G)\geq6$, then $\varphi(G)=d+1$. }
\end{theorem}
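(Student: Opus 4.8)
The plan is to adapt the construction in the proof of Theorem~\ref{mainthm}, but to run it simultaneously around \emph{two} far-apart vertices instead of one. Since $diam(G)\geq 6$, fix $u,v\in V(G)$ with $dist_{G}(u,v)=diam(G)$, and for a vertex $x$ let $B_{2}(x)$ denote the set of vertices at distance at most $2$ from $x$ in $G$. The key observation is that $dist_{G}(u,v)\geq 6$ is exactly enough to force $B_{2}(u)\cap B_{2}(v)=\emptyset$ \emph{and} that no edge of $G$ joins a vertex of $B_{2}(u)$ to a vertex of $B_{2}(v)$: an edge $\{a,b\}$ with $a\in B_{2}(v)$ and $b\in B_{2}(u)$ would give $dist_{G}(u,v)\leq 2+1+2=5$. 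Hence any partial colouring built inside $B_{2}(v)$ and any partial colouring built inside $B_{2}(u)$ are mutually non-interfering: their vertex sets are disjoint and no conflicting edge runs between them.

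We may assume $d\geq 3$ (for $d\leq 2$ the statement is vacuous or immediate: when $d=2$, $G$ is a cycle of length at least $12$, which has b-chromatic number $3=d+1$). Put $p=q=\frac{d-1}{2}$ if $d$ is odd and $p=\frac{d}{2}$, $q=\frac{d}{2}-1$ if $d$ is even; then $p,q\geq 1$, $p+q=d-1$, and $p,q\leq\frac{d}{2}$. Around $v$ we carry out exactly the colouring from the proof of Theorem~\ref{mainthm}: colour $v$ by $d+1$, colour $v_{1},\ldots,v_{d}\in N_{G}(v)$ by $1,\ldots,d$, and then extend the colouring through the sets $V_{1},\ldots,V_{p}$, invoking Lemma~\ref{mainlemma} at each step, so that $v,v_{1},\ldots,v_{p}$ all see every colour of $[d+1]$ on their closed neighbourhoods. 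Because we never pass step $\frac{d}{2}$, the estimate $deg_{H_{i}}(a)\geq|V_{i}|-(i-1)\geq\frac{|V_{i}|}{2}$ of that proof holds at every step $i\leq p$, so Lemma~\ref{mainlemma} applies (and no special vertex of $N_{G}(v)$ is needed here). Independently, around $u$ we do the same with the remaining colours: colour $u$ by $d$, colour $N_{G}(u)=\{u_{1},\ldots,u_{d}\}$ so that $u_{j}$ receives colour $p+j$ for $1\leq j\leq q$ while the leftover colours $\{1,\ldots,p\}\cup\{d+1\}$ are placed on $u_{q+1},\ldots,u_{d}$, and extend through the corresponding second-neighbourhood sets so that $u,u_{1},\ldots,u_{q}$ each see all of $[d+1]$; since $q\leq\frac{d}{2}$, Lemma~\ref{mainlemma} again applies at every step. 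By the first paragraph these two partial colourings together constitute a proper partial colouring of $G$.

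Finally, extend this partial colouring greedily to a proper colouring of all of $G$ using colours from $[d+1]$, which is possible because $\Delta(G)=d$. The greedy stage does not recolour any already-coloured vertex, and the closed neighbourhoods of $v,v_{1},\ldots,v_{p},u,u_{1},\ldots,u_{q}$ all lie inside $B_{2}(v)\cup B_{2}(u)$ and were completely coloured; hence each of these $d+1$ vertices remains a colour-dominating vertex, one for colour $d+1$ (on $v$), colours $1,\ldots,p$ (on $v_{1},\ldots,v_{p}$), colour $d$ (on $u$), and colours $p+1,\ldots,p+q=d-1$ (on $u_{1},\ldots,u_{q}$). Since $p+q=d-1$, these account for all of $[d+1]$, so every colour is realised and the colouring is a b-colouring with $d+1$ colours; together with the trivial bound $\varphi(G)\leq\Delta(G)+1=d+1$ this yields $\varphi(G)=d+1$.

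The essentially new content beyond Theorem~\ref{mainthm} is the opening distance computation (that $dist_{G}(u,v)\geq 6$, and not merely $5$, makes the two radius-$2$ balls non-adjacent), together with the bookkeeping that splits the $d+1$ colours into two packages of sizes $p+1$ and $q+1$ with $p,q\leq\frac{d}{2}$, so that the matching argument of Lemma~\ref{mainlemma} remains valid on \emph{both} sides at once; this balancing is the only point where one must be a little careful. Everything else repeats the argument already given verbatim.
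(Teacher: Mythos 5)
Your proposal is correct and follows essentially the same route as the paper: pick two vertices at distance at least $6$, observe that their radius-$2$ balls are disjoint with no edges between them, run the construction from the proof of Theorem \ref{mainthm} around each centre so that the two partial colourings realize complementary subsets of $[d+1]$, and extend greedily. The only difference is cosmetic bookkeeping — you split the colours as $p+1$ and $q+1$ with $p,q\le\frac{d}{2}$ (thereby avoiding the special-vertex step for odd $d$), whereas the paper uses the split $\lfloor\frac{d+3}{2}\rfloor$ versus the remaining $d+1-\lfloor\frac{d+3}{2}\rfloor$.
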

\begin{proof}
{There is nothing to prove when $d\in \{0,1,2\}$. So, suppose
that $d\geq3$. Since $diam(G)\geq6$, there are two vertices $v$
and $w$ of distance at least 6. According to the proof of Theorem
\ref{mainthm}, there exists a partial coloring (using all colors
of the set $[d+1]$) on a subset $V$ of vertices that are at
distance at most two from $v$ such that all colors of the set
$[\lfloor\frac{d+3}{2}\rfloor]$ are realized. Also, there exists
a partial coloring (using all colors of the set $[d+1]$) on a
subset $W$ of vertices that are at distance at most two from $w$
such that all colors of the set $[d+1]\setminus
[\lfloor\frac{d+3}{2}\rfloor]$ do realize. Since $v$ and $w$ have
distance at least 6, there are not any edges between $V$ and $W$.
Therefore, these two partial colorings do not conflict and
therefore, they form a partial coloring on $V\bigcup W$ that all
colors of the set $[d+1]$ are realized. This partial coloring
extends to a coloring of $G$ greedily and therefore,
$\varphi(G)=d+1$. }
\end{proof}

\section{Vertex Connectivity}

Vertex connectivity of a graph $G$, denoted by $\kappa(G)$, is
the minimum cardinality of a subset $U$ of $V(G)$ such that
$G\setminus U$ is either disconnected or a graph with only one
vertex. It is well-known that for each graph $G$, $\kappa(G)\leq
\delta(G)$ where $\delta(G)$ denotes the minimum degree of $G$.
In this section, we show that if $G$ is a $d$-regular graph that
contains no $4$-cycle and $\kappa(G)\leq \frac{d+1}{2}$, then
$\varphi(G)=d+1$. This upper bound for vertex connectivity is
sharp in the sense that the vertex connectivity of the Petersen
graph is $\frac{d+1}{2}+1$ although its b-chromatic number is not
$d+1$.

\begin{theorem}{ Let $G$ be a $d$-regular graph that contains no $4$-cycle. If $\kappa(G)\leq \frac{d+1}{2}$,
then $\varphi(G)=d+1$. This upper
bound for vertex connectivity is sharp for the Petersen graph. }
\end{theorem}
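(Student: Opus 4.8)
The plan is to exploit a small vertex cut in the same spirit as the diameter argument: if $\kappa(G)\leq\frac{d+1}{2}$, then there is a separating set $S$ with $|S|\leq\frac{d+1}{2}$ whose removal disconnects $G$ into components $G_1,G_2,\dots$. First I would pick two vertices $v\in G_1$ and $w\in G_2$ (in distinct components of $G\setminus S$), chosen if possible so that $v$ and $w$ are not adjacent to too much of $S$; the key point is that any path from the closed second neighborhood of $v$ to that of $w$ must pass through $S$, so the only ``interference'' between a partial coloring built around $v$ and one built around $w$ occurs on $S$ and on vertices adjacent to $S$.

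The main step is to run the construction from the proof of Theorem \ref{mainthm} around $v$, producing a partial coloring on a set $V$ of vertices at distance at most two from $v$ in which all colors of $[\lfloor\frac{d+3}{2}\rfloor]$ are realized, and simultaneously around $w$, producing a partial coloring on a set $W$ with all colors of $[d+1]\setminus[\lfloor\frac{d+3}{2}\rfloor]$ realized. Unlike the $\mathrm{diam}\geq 6$ case, $V$ and $W$ need not be at distance $\geq 6$, so edges between them can only arise through $S$. Because $|S|\leq\frac{d+1}{2}$ is roughly half the degree, there is enough slack in the bipartite matching arguments (the degree bounds in Lemma \ref{mainlemma} were established with room to spare, $\deg_{H_i}(a)\geq|V_i|-(i-1)$ against a threshold of $|V_i|/2$) to absorb the at most $\frac{d+1}{2}$ extra forbidden colors coming from the few vertices of $V\cup S$ that sit on the $v$-side, and symmetrically on the $w$-side. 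Concretely, I would incorporate $S$ entirely into the $v$-side construction (color-dominating vertices and their neighborhoods are chosen among vertices reachable from $v$ without crossing $S$), so that on the $w$-side the only already-colored neighbors a vertex of $W$ can have lie in $W$ itself plus possibly a few vertices of $S$; one then re-derives the degree inequalities for the matching graphs $H_i$ on the $w$-side with the term $|S|\leq\frac{d+1}{2}$ subtracted and checks the bound still exceeds $|V_i|/2$ for the relevant range of $i$. After both partial colorings are in place and shown compatible, extend greedily to all of $G$; since every color in $[d+1]$ is realized, $\varphi(G)=d+1$.

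The hard part will be handling the case where one side of the cut is ``small'' — in particular if a component of $G\setminus S$ together with $S$ cannot even accommodate the full first-neighborhood-plus-half construction around its chosen vertex, or if $v$ or $w$ is forced to be adjacent to all of $S$. In that regime the clean ``two independent halves'' picture breaks down and one must argue more carefully: either shift which half gets which block of colors, or observe that a $d$-regular graph with a cut of size $\leq\frac{d+1}{2}$ and a tiny component must contain a $4$-cycle (a small component $C$ with $|C\cup S|$ too small relative to $d$ forces two vertices of $C$ to share two neighbors), contradicting the hypothesis. Pinning down exactly how small a component can be under $4$-cycle-freeness and $d$-regularity — essentially a counting bound forcing $|C|\geq d-|S|+1$ or so — is where the real work lies, and once that lower bound on component size is secured, the matching arguments go through as above.
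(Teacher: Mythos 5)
Your overall strategy---split $G$ along a minimum cut $U$, realize roughly half of the $d+1$ colors in each of two components, and merge---is the same as the paper's, but the mechanism you propose for controlling interference across the cut does not work, and the step you defer as ``the hard part'' is in fact the entire content of the paper's proof. Your plan is to let the two constructions touch $S=U$ and absorb the resulting extra forbidden colors by ``slack'' in the degree bounds feeding Lemma \ref{mainlemma}. There is no such slack: in the matching argument one has $\deg_{H_i}(a)\geq |V_i|-(i-1)$ with $|V_i|\geq d-2$, and at the last step $i=\frac{d}{2}$ this equals $\frac{d-2}{2}=\frac{|V_i|}{2}$ exactly, i.e.\ the hypothesis of Lemma \ref{mainlemma} is met with equality. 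Subtracting up to $|S|\leq\frac{d+1}{2}$ additional forbidden colors would push the degrees far below $\frac{|V_i|}{2}$, so the perfect matching can no longer be guaranteed and the construction collapses.

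The paper's proof avoids any interaction with $U$ altogether. The key lemma it actually proves is: in each component $G_i$ of $G\setminus U$ there is a vertex $a_i$ with $N_G(a_i)\cap U=\emptyset$. Since $G$ has no $4$-cycle, $a_i$ has at most one neighbor in each set $A_j^i$ of vertices of $G_i$ adjacent to $u_j\in U$, hence at least $d-\kappa(G)\geq\frac{d-1}{2}$ neighbors $x_1,\dots$ that are themselves non-adjacent to $U$; consequently $a_i$, all of $N(a_i)$, and all of the second neighborhoods $N(x_j)$ used in the construction lie entirely inside $G_i$. The two partial colorings (realizing colors $[\lfloor\frac{d}{2}\rfloor+1]$ in $G_1$ and the rest in $G_2$) therefore live in distinct components of $G\setminus U$ and cannot conflict---no weakened degree bound is ever needed. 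Establishing the existence of $a_i$ is where the work is: an inclusion--exclusion count using $4$-cycle-freeness, with separate treatment of $d=4$ and $d=5$ (and $d=3$ disposed of by the Jakovac--Klavzar classification of cubic graphs, since the Petersen graph has connectivity $3>\frac{d+1}{2}$). Your proposal gestures at a counting bound on component sizes but does not identify this statement, and without it (or some replacement) the argument as you outline it cannot be completed.
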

\begin{proof}
{There is nothing to prove when $d\in \{0,1,2\}$. Also, Jakovac
and Klavzar in \cite{ja.kl} showed that the only cubic graph that
contains no $4$-cycle and its b-chromatic number is not $4$ is the
Petersen graph. Since the vertex connectivity of the Petersen
graph is 3, the proof is complete for $d=3$. So, suppose that
$d\geq4$. Let $U$ be a set of minimum cardinality such that
$G\setminus U$ is either disconnected or a graph with only one
vertex. Obviously, $|U|=\kappa(G)$. Since $G$ is not a complete
graph, $G\setminus U$ is disconnected. Let $G_{1}$ and $G_{2}$ be
two distinct connected components of $G\setminus U$. Since
$\kappa(G)\leq \frac{d+1}{2}<d$, $|G_{i}|\geq2\ (i=1,2)$. For each
$i\in \{1,2\}$, we prove that there exists some $a_{i}\in G_{i}$
such that there are no edges between $a_{i}$ and $U$. In this
regard, we consider two cases, $d\in \{4,6,7,8,9,\ldots\}$ or
$d=5$.

Case $1$) The case $d\geq4$ and $d\neq5$: There is nothing to
prove when $U=\phi$ (or equivalently $G$ is disconnected). So, we
consider the case $U\neq\phi$. There exists $b_{i}\in G_{i}$ such
that $|N_{G}(b_{i})\bigcap U|\leq \frac{\kappa(G)+1}{2}$,
otherwise since $|G_{i}|\geq2$ and $G$ contains no $4$-cycle,
inclusion-exclusion principle implies that
$|U|>2(\frac{\kappa(G)+1}{2})-1=\kappa(G)$, a contradiction. Let
$U:=\{u_{j}|1\leq j\leq \kappa(G)\}$ and for each $1\leq j\leq
\kappa(G)$, set $A_{j}^{i}:=\{x|x\in G_{i}, \{x,u_{j}\}\in
E(G)\}$. For each $1\leq j\leq\kappa(G)$, $b_{i}$ has at most one
neighbor in $A_{j}^{i}$, otherwise $b_{i}$ and $u_{j}$ will be
contained in a $C_{4}$. Therefore,
$|N_{G}(b_{i})\bigcap(U\bigcup(\bigcup_{j=1}^{\kappa(G)}A_{j}^{i}))|\leq\frac{\kappa(G)+1}{2}+\kappa(G)=\frac{3\kappa(G)+1}{2}$.
If $d=4$, $\kappa(G)\leq\frac{d+1}{2}=\frac{5}{2}$ and therefore,
$\kappa(G)\leq 2$. Hence,
$|N_{G}(b_{i})\bigcap(U\bigcup(\bigcup_{j=1}^{\kappa(G)}A_{j}^{i}))|\leq\frac{7}{2}<4$
and therefore, there exists some vertex in $N_{G}(b_{i})$ which is
not in $U\bigcup(\bigcup_{j=1}^{\kappa(G)}A_{j}^{i})$.
Accordingly there exists some $a_{i}\in G_{i}$ such that
$N_{G}(a_{i})\bigcap U=\phi$. If $k>5$, then
$\frac{3\kappa(G)+1}{2}\leq\frac{\frac{3}{2}(d+1)+1}{2}=d+\frac{5-d}{4}<d$,
hence, there exists some $a_{i}\in G_{i}$ such that
$N_{G}(a_{i})\bigcap U=\phi$.

Case $2$) The case $d=5$: If $U=\phi$, there is nothing to prove.
If $U\neq\phi$, similar to the previous case, there exists
$b_{i}\in G_{i}$ such that $|N_{G}(b_{i})\bigcap
U|\leq\frac{\kappa(G)+1}{2}$. Also, let $U:=\{u_{j}|1\leq
j\leq\kappa(G)\}$ and for each $1\leq j\leq\kappa(G)$, define
$A_{j}^{i}:=\{x|x\in G_{i}, \{x,u_{j}\}\in E(G)\}$. Since for
each $1\leq j\leq\kappa(G)$, $b_{i}$ has at most one neighbor in
$A_{j}^{i}$,
$|N_{G}(b_{i})\bigcap(U\bigcup(\bigcup_{j=1}^{\kappa(G)}A_{j}^{i}))|=|(N_{G}(b_{i})
\bigcap
U)\bigcup(N_{G}(b_{i})\bigcap(\bigcup_{j=1}^{\kappa(G)}A_{j}^{i}))|\leq|N_{G}(b_{i})
\bigcap
U|+|N_{G}(b_{i})\bigcap(\bigcup_{j=1}^{\kappa(G)}A_{j}^{i})|\leq$$\frac{\kappa(G)+1}{2}+\kappa(G)$.
Obviously if $\kappa(G)<3$ or $|N_{G}(b_{i})\bigcap U|<2$ or
$|N_{G}(b_{i})\bigcap(\bigcup_{j=1}^{\kappa(G)}A_{j}^{i})|<3$,
then
$|N_{G}(b_{i})\bigcap(U\bigcup(\bigcup_{j=1}^{\kappa(G)}A_{j}^{i}))|<5$
and we are done. If none of these three conditions hold, we
conclude that $\kappa(G)=3$, $|N_{G}(b_{i})\bigcap U|\geq2$, and
$|N_{G}(b_{i})\bigcap(\bigcup_{j=1}^{\kappa(G)}A_{j}^{i})|\geq3$.
Since
$|N_{G}(b_{i})\bigcap(\bigcup_{j=1}^{\kappa(G)}A_{j}^{i})|\geq3$,
$G_{i}$ has at least three elements $b_{i_{1}}$,$b_{i_{2}}$ and
$b_{i_{3}}$ different from $b_{i}$. Since $|U|=\kappa(G)=3$ and
${{\kappa(G) \choose 2}}={{3 \choose 2}}=3$ and $G$ contains no
$4$-cycle, there exists some $1\leq l\leq 3$ such that
$|N_{G}(b_{i_{l}})\bigcap U|\leq 1$. Since $b_{i_{l}}$ has at most
three neighbors in $\bigcup_{j=1}^{\kappa(G)}A_{j}^{i}$,
therefore,
$|N_{G}(b_{i_{l}})\bigcap(U\bigcup(\bigcup_{j=1}^{\kappa(G)}A_{j}^{i}))|\leq
4$. Hence, there exists some $a_{i}\in G_{i}$ such that
$N_{G}(a_{i})\bigcap U=\phi$.

We conclude that there exist some $a_{1}\in G_{1}$ and some
$a_{2}\in G_{2}$ such that $N_{G}(a_{1})\bigcap U=\phi$ and
$N_{G}(a_{2})\bigcap U=\phi$. For each $i\in \{1,2\}$, $a_{i}$ has
at most $\kappa(G)$ neighbors in
$\bigcup_{j=1}^{\kappa(G)}A_{j}^{i}$, therefore,
$|N_{G}(a_{i})\setminus(\bigcup_{j=1}^{\kappa(G)}A_{j}^{i})|\geq
d-\kappa(G)\geq d-\frac{d+1}{2}=\frac{d-1}{2}$.

For coloring $G$, we consider two cases, $d$ is even or odd.

The case $d$ is even) In this case,
$|N_{G}(a_{i})\setminus(\bigcup_{j=1}^{\kappa(G)}A_{j}^{i})|\geq\frac{d-1}{2}$,
so,
$|N_{G}(a_{i})\setminus(\bigcup_{j=1}^{\kappa(G)}A_{j}^{i})|\geq\frac{d}{2}$.
Let $x_{1},x_{2},\ldots,x_{\frac{d}{2}}$ be $\frac{d}{2}$
different elements of
$N_{G}(a_{1})\setminus(\bigcup_{j=1}^{\kappa(G)}A_{j}^{1})$. We
color $a_{1}$ by color 1 and for each $1\leq j\leq\frac{d}{2}$, we
color $x_{i}$ by color $i+1$. Then we color all elements of
$N_{G}(a_{1})\setminus\{x_{1},x_{2},\ldots,x_{\frac{d}{2}}\}$ by
all colors of the set $[d+1]\setminus[\frac{d}{2}+1]$
injectively. Therefore, $a_{1}$ is a color-dominating vertex with
color 1. For each $1\leq i\leq\frac{d}{2}$, let
$V_{i}:=N_{G}(x_{i})\setminus(\{a_{1}\}\bigcup N_{G}(a_{1}))$,
$C_{i}:=\{2,\ldots,d+1\}\setminus($the set of colors that were
already appeared on $\{x_{i}\}\bigcup(N_{G}(x_{i})\bigcap
N_{G}(a_{1}))$, and $S_{i}:=\{a_{1}\}\bigcup
N_{G}(a_{1})\bigcup(\bigcup_{j=1}^{i}V_{j})$. Obviously,
$V_{i}\subseteq G_{1}$. Besides, $|V_{i}|=|C_{i}|=d-1$ or
$|V_{i}|=|C_{i}|=d-2$. Also, $|V_{i}|=|C_{i}|=d-2$ if and only if
$|N_{G}(x_{i})\bigcap N_{G}(a_{1})|=1$. Also, for each $1\leq
i,j\leq\frac{d}{2}$, if $i\neq j$, then $V_{i}\bigcap V_{j}=\phi$.
Now, we follow $\frac{d}{2}$ steps inductively. For each $1\leq
i\leq\frac{d}{2}$, at $i$-th step, we only color all vertices of
$V_{i}$ by all colors of $C_{i}$ injectively to make $x_{i}$ a
color-dominating vertex. Suppose by induction that $1\leq
i\leq\frac{d}{2}$ and for each $1\leq k\leq i-1$, at $k$-th step,
we have only colored all vertices of $V_{k}$ by all colors of
$C_{k}$ injectively in such a way that the resulting partial
coloring on $S_{k}$ is a proper partial coloring. Now, at $i$-th
step, we want to color only all vertices of $V_{i}$ by all colors
of $C_{i}$ injectively in such a way that the resulting partial
coloring on $S_{i}$ be a proper partial coloring. We consider a
bipartite graph $H_{i}$ with one partition $V_{i}$ and the other
partition $C_{i}$ such that a vertex $v\in V_{i}$ is adjacent to
a color $c\in C_{i}$ in the graph $H_{i}$ if and only if (in the
graph $G$) $v$ does not have any neighbors in $S_{i}$ already
colored by $c$. Such a coloring of all vertices of $V_{i}$ by all
colors of $C_{i}$ (as mentioned) exists if and only if $H_{i}$ has
a perfect matching.

Let $v$ be an arbitrary element of $V_{i}$. The set of neighbors
of $v$ in the graph $G$ that were already colored, is a subset of
$\{x_{i}\}\bigcup(\bigcup_{j=1}^{i-1}V_{j})$. Since $G$ contains
no $4$-cycle, for each $1\leq j\leq i-1$, $v$ has at most one
neighbor in $V_{j}$. Also, the color of the vertex $x_{i}$ does
not belong to $C_{i}$. Therefore,
$deg_{H_{i}}(v)\geq|C_{i}|-(i-1)$. Also, since for each $1\leq
j\leq i-1$, $x_{j}$ is a color-dominating vertex, each color of
$[d+1]$ appears at most once on $V_{j}$. Therefore, for each $c\in
C_{i}$, $deg_{H_{i}}(c)\geq|V_{i}|-(i-1)$. Hence, for each $x\in
V(H_{i})$, $deg_{H_{i}}(x)\geq|V_{i}|-(i-1)$. Since $|V_{i}|\geq
d-2$ and $1\leq i\leq\frac{d}{2}$,
$|V_{i}|-(i-1)\geq\frac{|V_{i}|}{2}$ and therefore,
$deg_{H_{i}}(x)\geq\frac{|V_{i}|}{2}$. So, Lemma \ref{mainlemma}
implies that $H_{i}$ has a perfect matching and we are done.

We conclude that we have a partial coloring on $S_{\frac{d}{2}}$
such that all colors of $[\frac{d}{2}+1]$ are realized.

Since $G_{1}$ and $G_{2}$ are two distinct connected components of
$G\setminus U$, there are not any edges between $G_{1}$ and
$G_{2}$ in the graph $G$. Similarly, there exists a partial
coloring on a subset of $G_{2}$ such that all colors of
$[d+1]\setminus[\frac{d}{2}+1]$ do realize. Since there are not
any edges between $G_{1}$ and $G_{2}$ in the graph $G$, these two
different partial colorings, form a partial coloring in the graph
$G$ such that all colors of $[d+1]$ are realized. This partial
coloring extends to a coloring of $G$ greedily and therefore,
$\varphi(G)=d+1$.

The case $d$ is odd) In this case, the proof is similar to the
previous case. For each $i\in \{1,2\}$,
$|N_{G}(a_{i})\setminus(\bigcup_{j=1}^{\kappa(G)}A_{j}^{i})|\geq\frac{d-1}{2}$.
There exists a partial coloring on a subset of $G_{1}$ such that
all colors of $[\frac{d+1}{2}]$ are realized. Also, there exists a
partial coloring on a subset of $G_{2}$ such that all colors of
$[d+1]\setminus[\frac{d+1}{2}]$ are realized. Since there are not
any edges between $G_{1}$ and $G_{2}$ in the graph $G$, these two
different partial colorings, form a partial coloring in the graph
$G$ such that all colors of $[d+1]$ are realized. This partial
coloring extends to a coloring of $G$ greedily and therefore,
$\varphi(G)=d+1$. }
\end{proof}

The following theorem can be proved similarly. We omit its proof
for the sake of brevity.

\begin{theorem}{ Let $d\in\mathbb{N}\bigcup\{0\}$ and $G$ be a $d$-regular graph
that contains no $4$-cycle. If $\kappa(G)<\frac{2d-1}{3}$, then
$min\{2\lfloor\frac{d+4}{3}\rfloor,d+1\}\leq\varphi(G)\leq d+1$.
Besides, if there exists a set $U\subseteq V(G)$ such that
$|U|=\kappa(G)$ and $G\setminus U$ has at least three components,
then $\varphi(G)=d+1$. }
\end{theorem}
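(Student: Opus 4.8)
The plan is to run the proof of the previous theorem (the one assuming $\kappa(G)\le\frac{d+1}{2}$) almost verbatim, with two changes: the vertex‑selection step is now \emph{cleaner}, because the hypothesis $\kappa(G)<\frac{2d-1}{3}$ is exactly the inequality needed, and the count of realized colours must be redone. The cases $d\in\{0,1,2\}$ are trivial, so assume $d\ge 3$. Take a minimum vertex cut $U$ with $|U|=\kappa(G)$; since $\kappa(G)<\frac{2d-1}{3}<d$, the graph $G$ is not complete, so $G\setminus U$ is disconnected, and (as $\kappa(G)<d$) every component has at least two vertices. Fix components $G_{1},G_{2}$, and also a third component $G_{3}$ for the final clause. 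If $U=\emptyset$ the selection step below is vacuous. Otherwise, in each $G_{i}$ the inclusion–exclusion argument from the previous proof (two vertices of $G_{i}$ share at most one common neighbour in $U$ because $G$ has no $4$-cycle) produces $b_{i}\in G_{i}$ with $|N_{G}(b_{i})\cap U|\le\frac{\kappa(G)+1}{2}$; again by the absence of $4$-cycles $b_{i}$ has at most one neighbour among the $G_{i}$-neighbours of each $u\in U$, so the number of neighbours of $b_{i}$ lying in $U$ or adjacent to $U$ is at most $\frac{\kappa(G)+1}{2}+\kappa(G)=\frac{3\kappa(G)+1}{2}$, and this is $<d$ \emph{precisely} because $\kappa(G)<\frac{2d-1}{3}$. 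Hence $b_{i}$ has a neighbour $a_{i}\in G_{i}$ with $N_{G}(a_{i})\cap U=\emptyset$ — this one line replaces the entire $d\in\{3,4,5,\ge 6\}$ case split of the previous theorem. Since $a_{i}$ has no neighbour in $U$ we get $N_{G}(a_{i})\subseteq G_{i}$, and since $a_{i}$ has at most one neighbour among the $G_{i}$-neighbours of each $u\in U$, at least $d-\kappa(G)>\frac{d+1}{3}$ vertices of $N_{G}(a_{i})$ are themselves non-adjacent to $U$.

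Next I colour a single component. Put $t:=\lfloor\frac{d+1}{3}\rfloor$, so $t+1=\lfloor\frac{d+4}{3}\rfloor$; a short check gives $t<d-\kappa(G)$ and, since $d\ge 3$, $t\le\lfloor d/2\rfloor$. Pick $x_{1},\dots,x_{t}\in N_{G}(a_{i})$ non-adjacent to $U$, colour $a_{i},x_{1},\dots,x_{t}$ with any prescribed set of $t+1$ distinct colours of $[d+1]$, and complete $N_{G}(a_{i})$ injectively with the remaining $d-t$ colours, making $a_{i}$ color-dominating. Then process $x_{1},\dots,x_{t}$ in turn exactly as in the proofs of Theorem~\ref{mainthm} and the previous theorem: at step $k$ set $V_{k}:=N_{G}(x_{k})\setminus(\{a_{i}\}\cup N_{G}(a_{i}))$ (this lies in $G_{i}$ since $x_{k}$ has no neighbour in $U$) and let $C_{k}$ be the colours not yet appearing in the closed neighbourhood of $x_{k}$; form the bipartite graph $H_{k}$ on $V_{k}\cup C_{k}$ joining $v$ to $c$ iff $v$ has no already-coloured neighbour of colour $c$; observe $|V_{k}|=|C_{k}|\ge d-2$ and that every vertex of $H_{k}$ has degree at least $|V_{k}|-(k-1)\ge\frac{|V_{k}|}{2}$ (using $k\le\lfloor d/2\rfloor$); and invoke Lemma~\ref{mainlemma} to get a perfect matching, i.e.\ a proper colouring of $V_{k}$ making $x_{k}$ color-dominating. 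The outcome is a partial colouring whose coloured vertices all lie in $G_{i}$ and which realizes exactly the prescribed set of $t+1$ colours.

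Now assemble. Run this in $G_{1}$ realizing $\{1,\dots,t+1\}$ and in $G_{2}$ realizing $\{d+1-t,\dots,d+1\}$. Because $G_{1},G_{2}$ are distinct components of $G\setminus U$ and neither partial colouring touches $U$, there are no edges between the coloured parts, so the two partial colourings combine into one realizing $\min\{2(t+1),d+1\}=\min\{2\lfloor\frac{d+4}{3}\rfloor,d+1\}$ colours. Extend greedily to all of $G$ (possible since $\Delta(G)=d$); this preserves the realized colours, and the ``delete an unrealized colour by recolouring its class'' step from the proof of Theorem~\ref{mainthm} turns the result into a genuine b-colouring with at least that many colours. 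Together with $\varphi(G)\le d+1$ this gives the stated inequality. For the final clause, since $3(t+1)\ge d+1$ we may partition $[d+1]$ into three blocks each of size at most $t+1$ and realize the three blocks in $G_{1},G_{2},G_{3}$ respectively; then all $d+1$ colours are realized, so $\varphi(G)=d+1$.

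The main (really the only) point requiring care is the choice of $t$, which must simultaneously be $\le\lfloor d/2\rfloor$ so that Lemma~\ref{mainlemma} applies, be at most the number $\ge d-\kappa(G)$ of $U$-avoiding neighbours of $a_{i}$, and satisfy $2(t+1)=2\lfloor\frac{d+4}{3}\rfloor$ as well as $3(t+1)\ge d+1$. Checking that $t=\lfloor\frac{d+1}{3}\rfloor$ meets all of these for each residue of $d$ modulo $3$ is the step that must be carried out carefully, but it is a routine finite computation rather than a genuine obstacle.
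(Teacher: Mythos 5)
The paper omits the proof of this theorem, stating only that it ``can be proved similarly'' to the preceding vertex-connectivity theorem; your reconstruction is exactly that intended adaptation, and the numerical facts it hinges on all check out: $\frac{3\kappa(G)+1}{2}<d$ is equivalent to $\kappa(G)<\frac{2d-1}{3}$ (eliminating the case split on $d$), $t=\lfloor\frac{d+1}{3}\rfloor$ satisfies $t\le\lfloor d/2\rfloor$ for $d\ge 3$ (so Lemma \ref{mainlemma} applies at every step), $t<d-\kappa(G)$, and $3\lfloor\frac{d+4}{3}\rfloor\ge d+2$ (so three components suffice to realize all $d+1$ colors). The proposal is correct and takes essentially the approach the paper indicates.
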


\ \\
{\bf Acknowledgement:} The author wishes to thank Professor
Hossein Hajiabolhassan for his useful comments.


\begin{thebibliography}{26}
\bibitem{ba.1} R. Balakrishnan, S. Francis Raj, Bounds for the b-chromatic number of the Mycielskian of some families of graphs , manuscript.


\bibitem{ba.2} R. Balakrishnan, S. Francis Raj, Bounds for the b-chromatic number of $G-v$, manuscript.

\bibitem{ba.3} R. Balakrishnan, S. Francis Raj, Bounds for the b-chromatic number of vertex-deleted subgraphs and the extremal graphs (extended abstract), {\em Electron. Notes Discrete Math.}\ {\bf
34}(2009), 353-358.

\bibitem{ba.co.fa} D. Barth, J. Cohen, T. Faik, On the b-continuity property of graphs, {\em Discrete Appl. Math.}\ {\bf
155}(2007), 1761-1768.

\bibitem{b.m.z} M. Blidia, F. Maffray, Z. Zemir, On b-colorings in regular graphs, {\em Discrete Appl. Math.}\ {\bf 157}(2009),
1787-1793.

\bibitem{bo} F. Bonomo, G. Duran, F. Maffray, J. Marenco, M. Valencia-Pabon, On the b-coloring of cographs and $P_{4}$-sparse graphs, {\em Graphs Combin.}\ {\bf
25}(2009), 153-167.

\bibitem{ca.ja} S. Cabello, M. Jakovac, On the b-chromatic number of regular graphs, submitted.

\bibitem{ch} F. Chaouche, A. Berrachedi, Some bounds for the b-chromatic number of a generalized Hamming graphs, {\em Far East J. Appl. Math.}\ {\bf
26}(2007), 375-391.

\bibitem{co} S. Corteel, M. Valencia-Pabon, J-C. Vera, On approximating the b-chromatic number, {\em Discrete Appl. Math.}\ {\bf
146}(2005), 106-110.

\bibitem{ef1} B. Effantin, The b-chromatic number of power graphs of complete caterpillars, {\em J. Discrete Math. Sci. Cryptogr.}\ {\bf
8}(2005), 483-502.

\bibitem{ef2} B. Effantin, H. Kheddouci, The b-chromatic number of some power graphs, {\em Discrete Math. Theor. Comput. Sci.}\ {\bf
6}(2003), 45-54.

\bibitem{ef3} B. Effantin, H. Kheddouci, Exact values for the b-chromatic number of a power complete $k$-ary tree, {\em J. Discrete Math. Sci. Cryptogr.}\ {\bf
8}(2005), 117-129.

\bibitem{sa.ko} A. El Sahili, M. Kouider, About b-colorings of regular graphs, {\em Res. Rep. 1432, LRI, Univ. Orsay, France,}\ 2006.

\bibitem{haj} H. Hajiabolhassan, On the b-chromatic number of Kneser graphs, {\em Discrete Appl. Math.}\ {\bf
158}(2010), 232-234.

\bibitem{ho.ko} C. T. Hoang, M. Kouider, On the b-dominating coloring of graphs, {\em Discrete Appl. Math.}\ {\bf
152}(2005), 176-186.

\bibitem{irv} R. W. Irving, D. F. Manlove, The b-chromatic number of a graph, {\em Discrete Appl. Math.}\ {\bf
91}(1999), 127-141.

\bibitem{jav} R. Javadi, B. Omoomi, On b-coloring of the Kneser graphs, {\em Discrete Math.}\ {\bf
309}(2009), 4399-4408.

\bibitem{ja.kl} M. Jakovac, S. Klavzar, The b-chromatic number of cubic graphs, {\em Graph Combin.}\ {\bf 26}(2010),
107-118.

\bibitem{ja.pe} M. Jakovac, I. Peterin, On the b-chromatic number of strong, lexicographic, and direct product, manuscript.

\bibitem{kou1} M. Kouider, b-chromatic number of a graph, subgraphs and degrees, {\em Res. Rep. 1392, LRI, Univ. Orsay, France,}\ 2004.

\bibitem{kou2} M. Kouider, M. Maheo, Some bounds for the b-chromatic number of a graph, {\em Discrete Math.}\ {\bf 256}(2002),
267-277.

\bibitem{kou3} M. Kouider, M. Maheo, The b-chromatic number of the cartesian product of two graphs, {\em Studia Sci. Math. Hungar.}\ {\bf 44}(2007),
49-55.

\bibitem{kou4} M. Kouider, M. Zaker, Bounds for the b-chromatic number of some families of graphs, {\em Discrete Math.}\ {\bf 306}(2006),
617-623.

\bibitem{kratv} J. Kratochvil, Zs. Tuza, M. Voigt, On the b-chromatic number of graphs, {\em In WG 02: Graph-Theoretic Concepts Comput. Sci., Lecture Notes in Comput. Sci.}\ {\bf 2573}(2002),
310-320.

\bibitem{saeed} S. Shaebani, On b-continuity of Kneser graphs of type $KG(2k+1,k)$, {\em Ars Combin. , to appear}\ .


\bibitem{kou3} C. I. B. Velasquez, F. Bonomo, I. Koch, On the b-coloring of $P_{4}$-tidy graphs, {\em Discrete Appl. Math.}\ {\bf 159}(2011),
60-68.




\end{thebibliography}
\end{document}